\documentclass[10pt]{article}

\setlength{\textwidth}{16truecm}\oddsidemargin=-0.1truecm
\evensidemargin=-0.1truecm \setlength{\textheight}{20cm}

\usepackage{amsmath, amscd, amssymb, amsthm}
\usepackage{pdfsync}

\newcommand{\R}{\mathbb{R}}

\newcommand{\C}{\mathbb{C}}
\newcommand{\bC}{\mathbb{C}}

\newcommand{\del}{\partial}
\newcommand{\real}{\textrm{Re }}

\newcommand{\pd}[2]{\frac{\partial #1}{\partial #2}}

\newcommand{\sumj}{\sum_{j=1}^{n-1}}

\newcommand{\evec}{\ensuremath{ \vec{\mathbf e }}}

\newtheorem{theorem}{Theorem}[section]
\newtheorem{lemma}[theorem]{Lemma}
\newtheorem{proposition}[theorem]{Proposition}
\newtheorem{corollary}[theorem]{Corollary}
\theoremstyle{definition}

\begin{document}

\title{Extension of Solutions to Holomorphic Partial Differential Equations}
\author{Jonathan Armel, Peter Ebenfelt}
\maketitle

\begin{abstract}
We derive sufficient conditions on a strictly pseudoconvex domain $\Omega \subset \C^n$ and a linear, holomorphic partial differential operator $P(z,\pd{}{z})$ for the existence of solutions to $Pu = 0$ which are holomorphic in $\Omega$ near $p\in\del\Omega$, but cannot be prolonged holomorphically across $p$. We first show how, given the existence of a supporting everywhere characteristic analytic hypersurface, a theorem of Tsuno can be used to construct the desired solutions. We then derive necessary conditions for the existence of such a hypersurface under the assumption that $\Omega$ is strictly pseudoconvex and simply characteristic at $p\in\del\Omega$ with respect to $P$.
\end{abstract}

%%%%%%%%%%%%%%%%%%%%%%%%%%%%%%%%%%%%%%%%%%%%%%%%%%%%%%%%%%%%%%%%%%%%%%%%%%%%%%%%%%%%%%%%%%%%%%%%%
\section{Preliminaries}
%%%%%%%%%%%%%%%%%%%%%%%%%%%%%%%%%%%%%%%%%%%%%%%%%%%%%%%%%%%%%%%%%%%%%%%%%%%%%%%%%%%%%%%%%%%%%%%%%

The two primary objects of study in this paper are a domain $\Omega$ in $\C^n$ with smooth boundary near a distinguished point $p \in \del \Omega$, and a linear partial  differential operator (PDO)
\begin{equation}\label{def:P}
P(z,\del) = \sum_{|\alpha|\leq m} a_\alpha(z)\left(\pd{}{z}\right)^\alpha
\end{equation}
with coefficients $a_\alpha(z)$ holomorphic in a neighborhood of $p$.
Our goal will be to determine conditions on $\Omega$ and $P$ that guarantee the existence of a solution $u$ to $Pu = 0$ which is holomorphic in $\Omega$ near $p$, but cannot be prolonged across $p$. That is, for some neighborhood $U$ of $p$, $u$ is holomorphic in $U \cap \Omega$, but there is no function $H(z)$ such that $H$ is holomorphic in some neighborhood $W$ of $p$ and $h(z) = H(z)$ for $z \in \Omega\cap W$. As this question is local in nature, we will often be somewhat loose with the neighborhoods of $p$, and will often just say ``near $p$'' to mean in some small enough neighborhood of $p$, or in $\Omega \cap U$ for some small enough neighborhood $U$.

We assume the domain $\Omega$ is pseudoconvex near $p\in \partial\Omega$, since otherwise all holomorphic functions in $\Omega$ near $p$, not just solutions to a holomorphic PDO, would extend generically across $\partial\Omega$. Moreover, we shall assume that $\partial \Omega$ is characteristic at $p$.
Recall that if $\del \Omega$ has a defining equation $\rho(z,\bar z)=0$ (meaning also that the holomorphic gradient $\del \rho\neq 0$ on $\partial \Omega$), then $\del\Omega$ is {\it characteristic} at $p$ with respect to $P$ if $$P_m(p,\del \rho(p,\bar p)) = 0$$
where $\del\rho(z,\bar z) = (\pd{\rho}{z_1}(z,\bar z),\ldots, \pd{\rho}{z_n}(z,\bar z))$ is the holomorphic gradient and  $$P_m(z,\zeta) = \sum_{|\alpha| = m} a_\alpha(z) \zeta^\alpha$$ is the principal symbol of $P$.
As the principal symbol $P_m$ can be viewed as a holomorphic function on the cotangent space of $\bC^n$ near $p$ and the property of being characteristic at $p$ can formulated as saying that the principal symbol vanishes on the conormal space to $\partial\Omega$ at $p$, this definition is independent of the choice of defining function $\rho$ and the choice of local holomorphic coordinates $z$ used near $p$. We note that in some literature such points are called Zerner characteristic, but if there is no fear of confusion, we shall simply call them characteristic. We shall also often omit the phrase ``with respect to $P$'', as there will only be one operator in question. If every point on a hypersurface, either real or complex, is characteristic we say that the hypersurface is everywhere characteristic.

A theorem of Zerner \cite{Zerner} states that if $\del\Omega$ is not characteristic at $p$, then every function $u(z)$ which is holomorphic in $\Omega \cap U$ and satisfies $Pu = 0$ in $U$ can be prolonged holomorphically across $p$. Thus we restrict ourselves to solutions near characteristic points. Further, we will make the simplifying assumptions that $\Omega$ is strictly pseudoconvex and that $\del\Omega$ is {\it simply characteristic} at $p$. That is, $\del\Omega$ is characteristic at $p$ and
$$\del_{\zeta} P_m(p,\del \rho(p,\bar p)) \neq (0,\ldots,0).$$
where $\del_{\zeta}P_m(z,\zeta) = (\pd{P_m}{\zeta_1}(z,\zeta),\ldots,\pd{P_m}{\zeta_n}(z,\zeta))$ is the gradient of $P_m$ in the $\zeta$ variables only.

%%%%%%%%%%%%%%%%%%%%%%%%%%%%%%%%%%%%%%%%%%%%%%%%%%%%%%%%%%%%%%%%%%%%%%%%%%%%%%%%%%%%%%%%%%%%%%%%%
\subsection{Strong P-Convexity}
%%%%%%%%%%%%%%%%%%%%%%%%%%%%%%%%%%%%%%%%%%%%%%%%%%%%%%%%%%%%%%%%%%%%%%%%%%%%%%%%%%%%%%%%%%%%%%%%%
The primary tool we will use to construct our solution $u$ is given by the following theorem, proven by Tsuno in \cite{Tsuno}.

\begin{theorem}[Tsuno]\label{Tsuno1}
Let $P(z,\del) = \sum_{|\alpha| \leq m} a_\alpha(z)(\pd{}{z})^\alpha$ with $a_\alpha$ holomorphic in a neighborhood $U$ of zero, and let $S$ be a complex analytic hypersurface through $0$ with defining function $f(z)$ in $U$. Assume that $S$ is everywhere simply characteristic with respect to $P$. Further, assume that
\begin{equation}\label{TsunoAssumption}
P_m(z,\del f(z)) = 0 \mbox{ for all }z\in U.
\end{equation}
Then there exist solutions $u(z)$ to the equation $Pu = 0$ in a neighborhood of zero (possibly multiple valued, ramified along $S$) of the form
$$u(z) = \frac{F(z)}{f(z)} + G(z)\log f(z) + H(z),$$ where $F(z)$, $G(z)$, and $H(z)$ are holomorphic near $0$. Moreover, there exist such solutions for which not both $F(z)$ and $G(z)$ are identically 0 and, hence, that do not extend holomorphically to a neighborhood of $0$.
\end{theorem}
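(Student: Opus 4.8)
The plan is to straighten $S$, put $P$ in a normalized local form, and then build $u$ coefficient by coefficient.

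\medskip\noindent\emph{Normalization.} Since $S$ is a smooth complex hypersurface through $0$, after a holomorphic change of coordinates we may assume $f(z)=z_n=:t$, so $S=\{t=0\}$ and $\del f\equiv e_n=(0,\dots,0,1)$. Hypothesis \eqref{TsunoAssumption} then reads $P_m(z,e_n)\equiv 0$, i.e. the coefficient of $(\del/\del z_n)^m$ in $P$ vanishes identically; in particular no monomial of $P$ involves $(\del/\del z_n)^m$, so $D_t:=\del/\del z_n$ occurs to order at most $m-1$. Writing $\zeta=(\zeta',\zeta_n)$, simple characteristicity at $0$ forces some tangential component $\del_{\zeta_l}P_m(0,e_n)$, $l<n$, to be nonzero (the $\zeta_n$-component is $m$ times the vanishing coefficient of $\zeta_n^m$); after a linear change of $z_1,\dots,z_{n-1}$ we take $l=1$, and dividing $P$ by the resulting unit we may assume the coefficient of $(\del/\del z_1)(\del/\del z_n)^{m-1}$ is $\equiv 1$. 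Grouping terms by the power of $D_t$ and writing $D'=(\del/\del z_1,\dots,\del/\del z_{n-1})$,
$$P=\sum_{j=0}^{m-1}A_j(z,D')\,D_t^{\,j},\qquad \operatorname{ord}A_j\le m-j,$$
where $A_{m-1}(z,D')=:L$ is first order with $(\del/\del z_1)$-coefficient equal to $1$ at $0$; shrinking the neighborhood, $L$ is non-characteristic with respect to $\{z_1=0\}$ throughout.

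\medskip\noindent\emph{The ansatz and the resulting system.} I look for $u=F_0(z')/t+G(z)\log t+H(z)$ with $F_0$ holomorphic in $z'$ and $G,H$ holomorphic near $0$; any sum $F(z)/f+G(z)\log f+H(z)$ is of this shape after replacing $F$ by $z\mapsto F(z',0)$ and absorbing the difference into $H$. The only singular functions that arise upon applying $P$ to this ansatz are $t^{-1},\dots,t^{-m}$ and $\log t$ (no $\log^2 t$, no $t^{-k}\log t$), and the coefficient of $\log t$ in $Pu$ is exactly $PG$. Thus $Pu=0$ is equivalent to: (i) $PG=0$; (ii) the coefficients $c_\nu(z')$ of $t^{-\nu}$ in $Pu$ vanish, $\nu=1,\dots,m$; (iii) $PH=W$, where $W$ is the holomorphic part of $-P(F_0(z')/t+G(z)\log t)$ (known once $F_0,G$ are known).

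\medskip\noindent\emph{Solving the system recursively.} A direct Laurent expansion gives $c_m=(-1)^{m-1}(m-1)!\,LF_0$, so (ii) at $\nu=m$ is $LF_0=0$; Cauchy--Kovalevskaya produces a holomorphic $F_0$ with $F_0|_{z_1=0}=1$, hence $F_0\not\equiv 0$. For $1\le \nu\le m-1$ one finds $c_\nu=\gamma_\nu\,L\,G_{m-1-\nu}+R_\nu$, where $\gamma_\nu\neq 0$ is an explicit combinatorial constant, $G_i$ is the $i$-th Taylor coefficient of $G$ in $t$, and $R_\nu$ is a differential operator applied to $F_0$ and to $G_0,\dots,G_{m-2-\nu}$. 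Running $\nu=m-1,m-2,\dots,1$ in turn, each equation (ii) becomes a non-characteristic first-order equation $L\,G_{m-1-\nu}=-\gamma_\nu^{-1}R_\nu$ with known right-hand side, solved by Cauchy--Kovalevskaya; this fixes $G_0,\dots,G_{m-2}$. With the first $m-1$ Taylor coefficients of $G$ prescribed, equation (i) written as a power series in $t$ becomes a recursion determining $G_{m-1},G_m,\dots$ (invert $L$ at each step, normalizing the Cauchy data to zero). Finally, $P_m(0,\cdot)\not\equiv 0$ — indeed $P_m(0,e_n+se_1)\neq 0$ for small $s\neq 0$ — so $\{z_n+sz_1=0\}$ is non-characteristic at $0$ and Cauchy--Kovalevskaya solves (iii) for a holomorphic $H$. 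The resulting $u$ solves $Pu=0$, is holomorphic off $S$ (genuinely ramified along $S$ exactly when $G\not\equiv 0$), and, since its $t^{-1}$-coefficient $F_0$ is not identically $0$, does not continue holomorphically across $S$.

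\medskip\noindent\emph{Main obstacle.} Every step above is a finite cascade of Cauchy--Kovalevskaya problems except the construction of $G$ from (i): one must show that the formal power series $G=\sum_i G_i(z')t^i$, built by inverting $L$ against the recursion coming from $PG=0$, actually converges near $0$. This is a Cauchy--Kovalevskaya majorant estimate that uses precisely the two structural facts above — the coefficient of $D_t^m$ vanishes and the coefficient of $D_t^{m-1}$ is $L$, non-characteristic in $z_1$ — the gain coming from the factor $i!/(i-m+1)!\sim i^{\,m-1}$ dividing the $i$-th equation, which beats the combinatorial weights on the right. Carrying out all the Cauchy--Kovalevskaya solutions together with this majorant argument on one common polydisc, and verifying that the constant in $c_m$ and the $\gamma_\nu$ are nonzero (routine binomial identities), are the points that require care.
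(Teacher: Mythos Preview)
The paper does not prove this theorem at all: Theorem~\ref{Tsuno1} is quoted from \cite{Tsuno} as an external result, and the paper's only contribution around it is Proposition~\ref{Tsuno2}, which removes the extra hypothesis \eqref{TsunoAssumption} by constructing a new defining function via a first-order Cauchy problem. So there is no ``paper's own proof'' to compare against.

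That said, your sketch is a plausible reconstruction of how such a result is proved (in the spirit of Hamada--Leray--Wagschal constructions of ramified solutions). The normalization is correct: after straightening $S$ to $\{z_n=0\}$, hypothesis \eqref{TsunoAssumption} kills the $D_t^m$ coefficient identically, and simple characteristicity forces a tangential principal term, which you normalize to $L=\partial_{z_1}+\cdots$. The cascade for the polar coefficients $c_m,\dots,c_1$ is set up correctly, and you are right that the genuine analytic content is the convergence of $G=\sum_i G_i(z')t^i$: this is a characteristic Cauchy problem for $P$ on $\{t=0\}$, so ordinary Cauchy--Kovalevskaya does not apply, and one needs a majorant argument exploiting the Fuchsian-type structure (no $D_t^m$, leading $D_t^{m-1}$ term equal to $L$). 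You flag this honestly as the ``main obstacle,'' but as written it is still a gap --- the heuristic that the $i^{m-1}$ gain beats the combinatorial growth is the right intuition, but turning it into an actual majorant on a common polydisc is the entire proof, not a routine verification. If you want a self-contained argument, that estimate is what you must actually carry out.
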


The somewhat artificial assumption in equation (\ref{TsunoAssumption}) can be removed, however. That is, we need only require $S$ to be everywhere characteristic:

\begin{proposition}\label{Tsuno2}
Let $P(z,\del) = \sum_{|\alpha| \leq m} a_\alpha(z)(\pd{}{z})^\alpha$ with $a_\alpha$ holomorphic in a neighborhood $U$ of zero, and let $S$ be a complex analytic hypersurface through $0$ with defining function $f(z)$ in $U$. Assume that $S$ is everywhere simply characteristic with respect to $P$.
Then there exist solutions $u(z)$ to the equation $Pu = 0$ in a neighborhood of zero (possibly multiple valued, ramified along $S$) of the form
\begin{equation}\label{uform}
u(z) = \frac{F(z)}{f(z)} + G(z)\log f(z) + H(z),
 \end{equation}
 where $F(z)$, $G(z)$, and $H(z)$ are holomorphic near $0$. Moreover, there exist such solutions for which not both $F(z)$ and $G(z)$ are identically 0 and, hence, that do not extend holomorphically to a neighborhood of $0$.
\end{proposition}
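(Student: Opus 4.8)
The plan is to reduce Proposition~\ref{Tsuno2} to Theorem~\ref{Tsuno1} by replacing the given defining function $f$ of $S$ by a suitably chosen one $\tilde f$, defining the same hypersurface $S$ near $0$, for which the extra hypothesis (\ref{TsunoAssumption}) holds, i.e. $P_m(z,\del\tilde f(z))\equiv 0$ near $0$. Since being everywhere simply characteristic with respect to $P$ is a property of $S$ alone (it is independent of the defining function, as noted above), $S$ remains everywhere simply characteristic relative to $\tilde f$, so Theorem~\ref{Tsuno1} applies to the triple $(P,S,\tilde f)$ and produces solutions $u$ of the stated form relative to $\tilde f$; since $\tilde f$ and $f$ have the same zero set and are nonvanishing multiples of one another near $0$, such a $u$ is immediately rewritten in the form (\ref{uform}) relative to $f$ (see the last step below). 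Thus everything comes down to constructing $\tilde f$.

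To do this, first note that because $S$ is everywhere characteristic the holomorphic function $g(z):=P_m(z,\del f(z))$ vanishes on $S=\{f=0\}$; since $f$ has nonvanishing differential it generates the ideal of $S$ near $0$, so $g(z)=f(z)\,a(z)$ for some $a$ holomorphic near $0$. I then look for $\tilde f$ of the form $\tilde f=e^{\psi}f$ with $\psi$ holomorphic near $0$. Then $\del\tilde f=e^{\psi}\bigl(\del f+f\,\del\psi\bigr)$, so by homogeneity of $P_m$ in $\zeta$,
$$P_m\bigl(z,\del\tilde f(z)\bigr)=e^{m\psi(z)}\,P_m\bigl(z,\del f(z)+f(z)\,\del\psi(z)\bigr),$$
and Taylor expanding $P_m(z,\cdot)$ about $\del f(z)$ and using $P_m(z,\del f(z))=f(z)a(z)$ gives
$$P_m\bigl(z,\del f(z)+f(z)\,\del\psi(z)\bigr)=f(z)\Bigl[\,a(z)+\sum_{k=1}^{n}\del_{\zeta_k}P_m\bigl(z,\del f(z)\bigr)\,\del_{z_k}\psi(z)+f(z)\,R\bigl(z,\del\psi(z)\bigr)\Bigr],$$
where $R(z,w)$ is a polynomial in $w=(w_1,\dots,w_n)$ with coefficients holomorphic in $z$ near $0$. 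Hence it is enough to find a holomorphic $\psi$ near $0$ solving the first-order scalar equation
$$a(z)+\sum_{k=1}^{n}\del_{\zeta_k}P_m\bigl(z,\del f(z)\bigr)\,\del_{z_k}\psi(z)+f(z)\,R\bigl(z,\del\psi(z)\bigr)=0.$$

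The key point is that at $z=0$ the coefficient vector of $\del\psi$ in this equation is exactly $\del_{\zeta}P_m(0,\del f(0))$ — the $f\,R$ term contributes nothing there since $f(0)=0$ — and this vector is nonzero precisely because $S$ is simply characteristic at $0$. After a linear change of holomorphic coordinates we may assume $\del_{\zeta_n}P_m(0,\del f(0))\neq 0$; the holomorphic implicit function theorem then puts the equation, near the relevant point, into the normal form $\del_{z_n}\psi=\Phi\bigl(z,\del_{z_1}\psi,\dots,\del_{z_{n-1}}\psi\bigr)$ with $\Phi$ holomorphic, and Cauchy--Kovalevskaya supplies a holomorphic solution $\psi$ near $0$, say with $\psi|_{\{z_n=0\}}\equiv 0$. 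With $\tilde f:=e^{\psi}f$ we obtain a holomorphic defining function of $S$ near $0$ with $P_m(z,\del\tilde f(z))\equiv 0$, so Theorem~\ref{Tsuno1} yields solutions $u=\tilde F/\tilde f+\tilde G\log\tilde f+\tilde H$ with $\tilde F,\tilde G,\tilde H$ holomorphic near $0$ and not both $\tilde F,\tilde G$ identically $0$. Finally, substituting $1/\tilde f=e^{-\psi}/f$ and $\log\tilde f=\log f+\psi$ rewrites $u$ in the form (\ref{uform}) with $F=\tilde F\,e^{-\psi}$, $G=\tilde G$, $H=\tilde G\,\psi+\tilde H$ holomorphic near $0$, and not both $F,G$ identically $0$ because $e^{-\psi}$ is nonvanishing; hence $u$ does not extend holomorphically across $0$.

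I expect the main obstacle to be precisely this construction of $\tilde f$: identifying the correct first-order PDE for $\psi$ and recognizing that the simple-characteristicity hypothesis is exactly what makes that PDE non-characteristic at $0$, so that Cauchy--Kovalevskaya applies. By contrast, the bookkeeping of the remainder term $f\,R$, checking that the Cauchy--Kovalevskaya solution (and hence $(z,\del\psi(z))$) stays in the domains where all the relevant holomorphic functions are defined, and reconciling the branches of $\log\tilde f$ and $\log f$, should all be routine.
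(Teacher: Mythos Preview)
Your proof is correct and follows essentially the same approach as the paper: both construct a new defining function $\tilde f$ for $S$ satisfying $P_m(z,\del\tilde f)\equiv 0$ by solving a first-order Cauchy problem via Cauchy--Kovalevskaya, with simple characteristicity furnishing exactly the non-characteristic condition needed. The only cosmetic difference is that the paper writes $\tilde f=c(z)f(z)$ and solves for $c$ (with $c|_{\{z_n=0\}}=1$), whereas you write $\tilde f=e^{\psi}f$ and solve for $\psi$; your exponential ansatz has the minor advantage that homogeneity factors out $e^{m\psi}$ and so removes the unknown function itself from the resulting PDE.
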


\begin{proof}
Since $S$ is (everywhere) simply characteristic with respect to $P$ near $0$, we may assume without loss of generality that $\del_{\zeta_n} P_m(z,\del f(z)) \neq 0$. We consider the following, first-order Cauchy problem for a holomorphic function $c(z)$ near $0$:
\begin{equation}\label{Cauchy1}
\left\{ \begin{split}
P_m(z,\del(c(z)f(z)) &= 0 \\
c(z_1,\ldots,z_{n-1},0) &= 1 %\mbox{for all $z \in \Gamma$}\\
\end{split}
\right.
\end{equation}
We claim that the classical Cauchy-Kowalevsky theorem implies that there is a unique holomorphic solution $c(z)$ to \eqref{Cauchy1} near $0$. To see this,
note that we can expand $P_m(z,\del(c(z)f(z)))=P_m(z,f(z)\del c(z)+c(z)\del f(z))$ in a (finite) Taylor series in $\zeta$ at $\zeta=c(z)\del f(z)$ as follows:
%\begin{equation}
\begin{multline}
P_m(z,\del(c(z)f(z)))=
P_m(z,c(z)\del f(z))+\\ f(z)\del_\zeta P_m(z,c(z)\del f(z))\cdot (\del c(z))+ \sum_{2\leq|\beta|\leq m} \frac{(\del_\zeta)^\beta P_m(z,c(z)\del f(z))}{\beta!}f(z)^{|\beta|}(\del c(z))^\beta,
\end{multline}
%\end{equation}
where $\cdot$ denotes the scalar product $a\cdot b:=\sum_{k=1}^na_kb_k$. Now, since $S$ is everywhere characteristic with respect to $P$,  $f(z)$ is a defining function for $S$, and $P_m(z,\zeta)$ is homogeneous of degree $m$ with respect to $\zeta$, it follows that $P_m(z,c(z)\del f(z))=c(z)^mP_m(z,\del f(z))$ is divisible by $f$, i.e.\
$$
P_m(z,c(z)\del f(z))=c(z)^mA(z)f(z)
$$
for some holomorphic function $A(z)$. Thus, the PDE in \eqref{Cauchy1} can be rewritten, after dividing out by a factor of $f(z)$,
\begin{multline}
A(z)c(z)^m+(c(z))^{m-1}\del_\zeta P_m(z,\del f(z))\cdot (\del c(z))+ \\ \sum_{2\leq|\beta|\leq m} \frac{(\del_\zeta)^\beta P_m(z,\del f(z))}{\beta!}f(z)^{|\beta|-1}(c(z))^{m-|\beta|}(\del c(z))^\beta =0.
\end{multline}
Consider the holomorphic function
\begin{multline}
B(z,w,\zeta):=A(z)w^m+w^{m-1}\del_\zeta P_m(z,\del f(z))\cdot \zeta+ \\ \sum_{2\leq|\beta|\leq m} \frac{(\del_\zeta)^\beta P_m(z,\del f(z))}{\beta!}f(z)^{|\beta|-1}w^{m-|\beta|}\zeta^\beta,
\end{multline}
near the point $(z,w,\zeta)=(0,1,\zeta^0)\in \bC^n\times \bC\times \bC^n$, where $\zeta^0$ is a point such that $B(0,1,\zeta^0)=0$. Note that $\del_{\zeta_n}B(0,1,\zeta^0)=\del_{\zeta_n}P_m(0,\del f(0))\neq 0$ (which also implies that the existence of a solution $\zeta^0$ to the polynomial equation $B(0,1,\zeta^0)=0$). Hence, by the implicit function theorem, we can solve for $\zeta_n$ in the equation $B(z,w,\zeta)=0$ near $(0,1,\zeta^0)$ to obtain $\zeta_n=D(z,w,\zeta')$, where $\zeta'=(\zeta_1,\ldots,\zeta_{n-1})$. Thus, the PDE in \eqref{Cauchy1} can be rewritten
$$
\del_{\zeta_n}c(z)=D(z,c(z),(\del c(z))').
$$
The classical Cauchy-Kowalevsky theorem now implies that \eqref{Cauchy1} has a unique solution $c(z)$ near $0$, as claimed above.

To complete the proof of Proposition \ref{Tsuno2}, we observe that $\psi(z)=c(z)f(z)$ is a defining function for $S$ and $\del_\zeta P_m(0,\del\psi(0)) \neq 0$, so Theorem \ref{Tsuno1} yields the required conclusion.
\end{proof}

We now define $\Omega$ to be {\it strongly $P$-convex} at a point $p \in \del\Omega$ if there is an everywhere characteristic, complex analytic hypersurface $S$ in some open neighborhood $U$ of $p$ such that $S \cap \overline{\Omega} \cap U= \{p\}$. The utility of strong $P$-convexity is seen in the following result.

\begin{corollary}\label{TsunoTheorem}
Let $\Omega \subset \C^n$ be a domain, $p \in \del\Omega$, and let $P(z,\del)$ be a linear differential operator with coefficients holomorphic at $p$ such that $\del\Omega$ is simply characteristic at $p$. If $\Omega$ is strongly $P$-convex at $p$, then there is a solution $u$ to the equation $P(z,\del)u(z) = 0$ which is holomorphic in $\Omega$ near $p$ but does not extend holomorphically to a full neighborhood of $p$.
\end{corollary}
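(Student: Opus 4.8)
The plan is to feed the supporting hypersurface produced by strong $P$-convexity into Proposition \ref{Tsuno2} and then restrict a single-valued branch of the resulting ramified solution to $\Omega$. Translating $p$ to the origin, take from the definition of strong $P$-convexity a complex analytic hypersurface $S=\{f=0\}$ in a neighborhood $U$ of $0$ that is everywhere characteristic with respect to $P$ and satisfies $S\cap\overline{\Omega}\cap U=\{0\}$; since $0\in\partial\Omega$, this also gives $S\cap\Omega\cap U=\emptyset$.

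The first real step is to upgrade ``everywhere characteristic'' to ``everywhere simply characteristic'' so that Proposition \ref{Tsuno2} applies. Because $S$ meets $\overline{\Omega}$ only at $0$, the hypersurface $S$ must be tangent to $\partial\Omega$ there, for otherwise a holomorphic curve in $S$ through $0$ with tangent transverse to the complex tangent space of $\partial\Omega$ would enter $\Omega$; hence the complex hyperplane $T_0 S=\{v:\partial f(0)\cdot v=0\}$ coincides with the complex tangent space of $\partial\Omega$ at $0$, which forces $\partial\rho(0,\bar 0)=\lambda\,\partial f(0)$ for some $\lambda\neq 0$. Since $P_m$ is homogeneous of degree $m$ in $\zeta$, its $\zeta$-gradient is homogeneous of degree $m-1$, so $\partial_\zeta P_m(0,\partial\rho(0,\bar 0))=\lambda^{m-1}\,\partial_\zeta P_m(0,\partial f(0))$; as $\partial\Omega$ is simply characteristic at $0$, the left-hand side is nonzero, whence $\partial_\zeta P_m(0,\partial f(0))\neq 0$. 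Thus $S$ is simply characteristic at $0$, and by continuity it is everywhere simply characteristic after shrinking $U$.

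Now apply Proposition \ref{Tsuno2} to obtain a solution $u=F/f+G\log f+H$ of $Pu=0$ near $0$, ramified along $S$, with $F,G,H$ holomorphic near $0$, not both $F$ and $G$ identically zero, and $u$ not extending holomorphically to a full neighborhood of $0$. Shrinking $U$ to a small ball, $\Omega\cap U$ is simply connected and disjoint from $S$, so $f$ has a single-valued holomorphic logarithm there and $u$ determines a single-valued holomorphic branch $u_0$ on $\Omega\cap U$ with $Pu_0=0$. It remains to show $u_0$ does not extend holomorphically across $0$. Suppose it extends to $\tilde u$ holomorphic on a ball $W\subseteq U$ about $0$. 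The set $(U\cap W)\setminus S$ is connected and contains the nonempty open set $\Omega\cap U\cap W$, and, taking a small complex disk transverse to $S$, there is a loop in $(U\cap W)\setminus S$ along which $f$ has nonzero winding number about $0$; continuing $u_0$ along such a loop produces $u_0+2\pi i k\,G$ with $k\neq 0$, while single-valuedness of $\tilde u$ forces the continuation back to $u_0$, so $G\equiv 0$. Then $u=F/f+H$, and analytically continuing the identity $\tilde u=F/f+H$ from $\Omega\cap U\cap W$ over all of $(U\cap W)\setminus S$ shows that $F/f=\tilde u-H$ extends holomorphically past $S$; hence $f\mid F$ and $u$ itself extends holomorphically to a neighborhood of $0$, contradicting the choice of $u$. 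Therefore $u_0$ is the required solution.

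The step I expect to be the main obstacle is the last one: converting the non-extendability of the multivalued germ $u$ furnished by Proposition \ref{Tsuno2} into non-extendability of the concrete single-valued branch $u_0$ living over $\Omega$. The monodromy computation removes the logarithmic term, and then holomorphic extension of $u_0$ would force $f\mid F$ and make $u$ itself extend, the desired contradiction. The other point requiring care is the elementary geometry that the support condition $S\cap\overline{\Omega}\cap U=\{0\}$ makes $\partial f(0)$ parallel to $\partial\rho(0,\bar 0)$; this is precisely where the simple characteristicity of $\partial\Omega$ at $p$ is used, ensuring that $S$ is eligible for Proposition \ref{Tsuno2}.
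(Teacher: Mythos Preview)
Your proof is correct and follows the same route as the paper's own argument, which merely sketches the two key facts you spell out: that $S$ is automatically simply characteristic at $p$ (via tangency of $S$ to $\partial\Omega$ forced by $S\cap\overline{\Omega}\cap U=\{p\}$), and that a single-valued branch of the ramified solution exists on the simply connected set $\Omega\cap U$. Your monodromy argument showing that this branch cannot extend across $p$ is a careful justification of a step the paper leaves to the reader.
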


The proof is a direct consequence of Proposition \ref{Tsuno2} and the following facts (details are left to the reader): (i) The everywhere characteristic, complex analytic hypersurface $S$, whose existence is posited by the assumption of strong $P$-convexity at $p$, is necessarily simply characteristic, since $\del \Omega$ is simply characteristic at $p$. (ii) By choosing a sufficiently small, convex neighborhood $U$ of $p$, the intersection $\Omega\cap U$ will be simply connected and, hence, one may choose a holomorphic branch of the $\log$ appearing in \eqref{uform}.

We would like to point out that in order to find a local solution to $Pu=0$ in $\Omega$ that does not extend holomorphically across $p\in\del \Omega$, using Proposition \ref{Tsuno2} as above, it clearly suffices to produce an everywhere characteristic analytic hypersurface $S$ through $p$ which does not enter into $\Omega$, i.e.\ $S\cap\overline{\Omega}\subset\del\Omega$, a weaker condition than that of strong $P$ convexity as defined above. However, in all results in this paper the hypersurface $S$ produced will in fact satisfy $S\cap\overline{\Omega}=\{p\}$ and, therefore we shall take this as our definition of strong $P$-convexity.

%%%%%%%%%%%%%%%%%%%%%%%%%%%%%%%%%%%%%%%%%%%%%%%%%%%%%%%%%%%%%%%%%%%%%%%%%%%%%%%%%%%%%%%%%%%%%%%%%
\subsection{The problem as a uniqueness problem}
%%%%%%%%%%%%%%%%%%%%%%%%%%%%%%%%%%%%%%%%%%%%%%%%%%%%%%%%%%%%%%%%%%%%%%%%%%%%%%%%%%%%%%%%%%%%%%%%%

The problem of finding a solution to $Pu = 0$ which cannot be holomorphically extended across $p$ can be considered as one regarding uniqueness, or injectivity of a linear operator. To discuss this we need some further notation.

We use $\mathcal{O}_p$ and $\mathcal{O}_p(\Omega)$ to denote the germs at $p$ of functions holomorphic either in a full neighborhood of $p$ or in $\Omega$ near $p$, respectively. More precisely, $\mathcal{O}_p$ is the set of functions $u$ which are holomorphic in a neighborhood of $p$, with two such functions identified if they agree on some such neighborhood. $\mathcal{O}_p(\Omega)$ is the set of functions $u$ for which there is a neighborhood $U$ of $p$ such that $u$ is holomorphic in $\Omega \cap U$, again identifying two functions that agree in $\Omega \cap W$ for some neighborhood $W$ of $p$.

Note that the linear, holomorphic PDO $P(z,\del)$ given by \eqref{def:P} induces linear operators $P\colon \mathcal O_p\to \mathcal O_p$ and $P\colon\mathcal O_p(\Omega)\to \mathcal O_p(\Omega)$. Since the principal symbol does not vanish on the cotangent space over $p$ for the class of PDOs considered here, the linear map $P\colon\mathcal O_p\to \mathcal O_p$ is surjective. The question whether $P\colon\mathcal O_p(\Omega)\to \mathcal O_p(\Omega)$ is surjective is more delicate and closely related to the interplay between the geometry of $\del\Omega$ and $P$. This latter surjectivity question has been much studied in the literature, at least in the strictly pseudoconvex case (see the next section for a brief history). We also note that, since we have a natural injection $\mathcal O_p\to \mathcal O_p(\Omega)$, the map $P\colon\mathcal O_p(\Omega)\to \mathcal O_p(\Omega)$ descends to the quotient space
\begin{equation}\label{linop}
P\colon \mathcal{O}_p(\Omega) / \mathcal{O}_p \to \mathcal{O}_p(\Omega) / \mathcal{O}_p.
\end{equation}
Since $P\colon \mathcal O_p\to \mathcal O_p$ is surjective, it follows that $P\colon\mathcal O_p(\Omega)\to \mathcal O_p(\Omega)$ is surjective if and only if
\eqref{linop} is. Now, injectivity of \eqref{linop} is equivalent to the statement that every solution of $Pu=f$, for $f\in \mathcal O_p$, is in $\mathcal O_p$, i.e.\ if $Pu$ extends holomorphically across $p$, then so does $u$. Thus, our problem in this paper can be formulated as finding conditions on $P$ and $\del\Omega$ near $p$ such that \eqref{linop} is {\it not} injective. We mention here that there is also a large literature on conditions that imply that \eqref{linop} is injective, e.g.\ \cite{Zerner}, \cite{Tsuno}, \cite{Tsuno80}, \cite{P81}, \cite{EKS98} and references therein.

With this, we are ready to address the problem at hand.

%%%%%%%%%%%%%%%%%%%%%%%%%%%%%%%%%%%%%%%%%%%%%%%%%%%%%%%%%%%%%%%%%%%%%%%%%%%%%%%%%%%%%%%%%%%%%%%%%
\section{Sufficient Conditions for Strong $P$-Convexity}\label{StrictlyPseudoconvex}
%%%%%%%%%%%%%%%%%%%%%%%%%%%%%%%%%%%%%%%%%%%%%%%%%%%%%%%%%%%%%%%%%%%%%%%%%%%%%%%%%%%%%%%%%%%%%%%%%

In this section we assume $\Omega$ is strictly pseudoconvex at $p \in \del \Omega$ and that $\del \Omega$ is simply characteristic at $p$ with respect to an operator $P$. The existence of solutions to $Pu=f$ has been studied extensively under these conditions, primarily using the tools of microlocal analysis and hyperfunctions given by Sato, Kawai, and Kashiwara in \cite{SKK}. Using these, Kashiwara and Kawai \cite{KK} derive a Hermitian form $Q$, given below, such that if $Q$ is positive definite on $T_p^{1,0}M \times \C$ then $P: \mathcal{O}_p(\Omega) / \mathcal{O}_p \to \mathcal{O}_p(\Omega) / \mathcal{O}_p$ is surjective, and if $P$ is surjective, then $Q$ is positive semidefinite.

Building on these results, Tr\'{e}preau \cite{Trepreau} (see also \cite{Hormander}) has shown that the surjectivity of $P$ is equivalent to condition $(\Psi)$, as given by Nirenburg and Treves \cite{NT}. $(\Psi)$ is essentially a positivity condition concerning the bicharacteristic strips of $P$ emanating from characteristic points of $M$. Kawai and Takei \cite{KT} later give another equivalent condition, concerning local projections of bicharacteristics.

Our results concern the injectivity of $P:\mathcal{O}_p(\Omega) / \mathcal{O}_p \to \mathcal{O}_p(\Omega) / \mathcal{O}_p$. Rather than using microlocal analysis, we will find sufficient conditions under which we can construct an everywhere characteristic hypersurface that does not intersect $\overline{\Omega}$, and use Proposition \ref{Tsuno2} to prove injectivity.

%%%%%%%%%%%%%%%%%%%%%%%%%%%%%%%%%%%%%%%%%%%%%%%%%%%%%%%%%%%%%%%%%%%%%%%%%%%%%%%%%%%%%%%%%%%%%%%%%
%\subsection{}
%%%%%%%%%%%%%%%%%%%%%%%%%%%%%%%%%%%%%%%%%%%%%%%%%%%%%%%%%%%%%%%%%%%%%%%%%%%%%%%%%%%%%%%%%%%%%%%%%

As in Kawai and Takei \cite{KT}, for each $z \in \C^{n+1}$ near $p$ we define the Hermitian form
\begin{equation}
Q_z(t,\bar t) = \sum_{0\leq j,k\leq n+1} q_{j,k}(z,\bar z)t_j\bar t_k
\end{equation}
for $t=(t_0,\ldots, t_{n+1}) \in \C^{n+2}$, where the coefficients $q_{j,k}$ are given by
\begin{align*}
q_{j,k}(z,\bar z) &= \rho_{z_j\bar z_k}(z,\bar z) & 1\leq j,k \leq n+1 \\
q_{j,0}(z, \bar z) &= P_{m,j}(z,\del \rho(z,\bar z)) + \sum_{k=1}^{n+1} P_m^{(k)}(z,\del \rho(z,\bar z))\rho_{z_j z_k}(z,\bar z) & 1\leq j \leq n+1\\
q_{0,j}(z,\bar z) &= \overline{q_{j,0}(z,\bar z)} & 1\leq j \leq n+1 \\
q_{0,0}(z,\bar z)  &= \sum_{1\leq j,k \leq n+1} P_m^{(j)}(z,\del\rho(z,\bar z)) \overline{P_m^{(k)}(z,\del\rho(z,\bar z))}\rho_{z_j \bar z_k}(z,\bar z)
\end{align*}

Here, as throughout this section, we use subscripts and superscripts on $P_m$ to denote partial derivatives:
\[ P_{m,j}(z,\zeta) = \pd{P_m}{z_j}(z,\zeta),\ \ P_m^{(j)}(z,\zeta) = \pd{P_m}{\zeta_j}(z,\zeta). \]
We will say \emph{condition {\rm (Pos)} holds at $p$} if $Q_p(t,\bar t)$ is positive definite on $\bC\times T_p^{1,0}M $. That is, if
\begin{equation}\label{pos}
\sum_{j,k=0}^n q_{j,k}(p,\bar p)t_j\bar t_k > 0 \mbox{ \qquad when \qquad } t \neq 0 \,\mbox{ and } \ \sum_{j=1}^{n+1}\rho_{z_j}(p,\bar p)t_j = 0.
\end{equation}
We now prove the main result of the section, showing that if (Pos) holds, then $\Omega$ is strongly $P$-convex. Thus we have that at a simply characteristic point where (Pos) holds, $Pu = f$ has a solution for every $f \in \mathcal{O}_p(\Omega) / \mathcal{O}_p$ by \cite{KK}, but such $u$ is not unique in $\mathcal{O}_p(\Omega) / \mathcal{O}_p$ by Proposition \ref{Tsuno2}.

\begin{theorem}\label{StrictlyPseudoconvexTheorem}
Let $\Omega \subset \C^{n+1}$ be a strictly pseudoconvex domain and let $p \in \del\Omega$. Let $P(z, \del)$ be a linear, partial differential operator with coefficients holomorphic near $p$, and assume $\del\Omega$ is simply characteristic with respect to $P$ at $p$. If {\rm (Pos)} holds at $p$, then $\Omega$ is strongly $P$-convex at $p$. %If $Q_p$ is not positive semidefinite on $T_p^{1,0}M \times \C$, then $\Omega$ is not strongly $P$-convex at $p$.
\end{theorem}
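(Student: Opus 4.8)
The plan is to produce, under the hypothesis (Pos), an everywhere characteristic complex analytic hypersurface $S$ through $p$ which is tangent to $\partial\Omega$ at $p$ and whose second‑order contact with $\partial\Omega$ is such that $\rho|_S$ has a strict local minimum at $p$; then $S\cap\overline\Omega\cap U=\{p\}$ for a sufficiently small neighborhood $U$, which is exactly strong $P$‑convexity. The hypersurface will be built by first prescribing the $2$‑jet of a defining function $f$ of $S$ at $p$, and then extending it to an everywhere characteristic hypersurface by solving the eikonal equation $P_m(z,\partial f)=0$ via Cauchy--Kowalevsky, exactly as in the proof of Proposition \ref{Tsuno2}.

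First I would normalize. Put $p=0$ and choose holomorphic coordinates so that $\partial\rho(0)=(0,\dots,0,1)$ and, using strict pseudoconvexity together with a holomorphic change of coordinates killing the holomorphic quadratic terms, $\rho=2\,\real z_{n+1}+\sum_{j=1}^n\lambda_j|z_j|^2+o(|z|^2)$ with all $\lambda_j>0$. Euler's identity applied to $P_m(0,\partial\rho(0))=0$ forces $P_m^{(n+1)}(0,\partial\rho(0))=0$, so ``simply characteristic'' means $P_m^{(j)}(0,\partial\rho(0))\neq0$ for some $j\leq n$; relabel $z_1,\dots,z_n$ so that $j=1$ works. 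A short computation (completing the square in $Q_0$, using $\rho_{z_jz_k}(0)=0$, $\rho_{z_j\bar z_k}(0)=\lambda_j\delta_{jk}$ for $1\leq j,k\leq n$, and $\rho_{z_{n+1}\bar z_k}(0)=0$) shows that in these coordinates (Pos) is equivalent to the single scalar inequality $\sum_{j=1}^n\lambda_j\,|P_m^{(j)}(0,\partial\rho(0))|^2>\sum_{j=1}^n|P_{m,j}(0,\partial\rho(0))|^2/\lambda_j$.

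Next I would analyze the admissible $2$‑jets. A hypersurface $S=\{f=0\}$ with $S\cap\overline\Omega=\{0\}$ near $0$ must be tangent to $\partial\Omega$ at $0$, so, normalizing $\partial f(0)=\partial\rho(0)$, $S=\{z_{n+1}=h(z')\}$ with $h(z')=-\tfrac12\sum_{j,k\leq n}c_{jk}z_jz_k+O(|z'|^3)$, where $(c_{jk})_{1\leq j,k\leq n}$ is the corresponding block of $\partial^2f(0)$. Differentiating the eikonal identity $P_m(z,\partial f(z))=0$ twice at $0$ and using $P_m^{(n+1)}(0,\partial\rho(0))=0$, one finds the only constraint everywhere‑characteristicity places on that block is the linear system $\sum_{k=1}^n P_m^{(k)}(0,\partial\rho(0))\,c_{ki}=-P_{m,i}(0,\partial\rho(0))$, $i=1,\dots,n$; and conversely every symmetric block satisfying it is realizable by choosing Cauchy data on $\{z_1=0\}$ with the matching $2$‑jet (this is where $P_m^{(1)}(0,\partial\rho(0))\neq0$ enters). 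On $S$ one computes $\rho|_S=\sum_j\lambda_j|z_j|^2-\real\sum_{j,k\leq n}c_{jk}z_jz_k+o(|z'|^2)$, whose quadratic part is positive definite on $\R^{2n}$ iff $\bigl|\sum_{j,k}c_{jk}z_jz_k\bigr|<\sum_j\lambda_j|z_j|^2$ for $z'\neq0$; after the substitution $w_j=\sqrt{\lambda_j}\,z_j$ this is $\|\tilde C\|_{\mathrm{op}}<1$ for the complex symmetric matrix $\tilde C=(c_{jk}/\sqrt{\lambda_j\lambda_k})$ (for complex symmetric matrices the numerical radius equals the operator norm, via the Takagi factorization), while the linear constraint becomes $\tilde C\tilde p=-\tilde q$ with $\tilde p_j=\sqrt{\lambda_j}\,P_m^{(j)}(0,\partial\rho(0))$, $\tilde q_j=P_{m,j}(0,\partial\rho(0))/\sqrt{\lambda_j}$. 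By the previous paragraph, (Pos) is precisely $\|\tilde q\|<\|\tilde p\|$, so the heart of the matter is the linear‑algebraic claim: \emph{if $\tilde p\neq0$ and $\|\tilde q\|<\|\tilde p\|$, there is a complex symmetric $\tilde C$ with $\tilde C\tilde p=-\tilde q$ and $\|\tilde C\|_{\mathrm{op}}<1$.} I would prove this by scaling and unitary congruence to reduce to $\tilde p=e_1$ and $-\tilde q=v_1e_1+se_2$ with $v_1,s\geq0$, and then letting $\tilde C$ act as $\left(\begin{smallmatrix}v_1&s\\ s&-v_1\end{smallmatrix}\right)$ on $\mathrm{span}\{e_1,e_2\}$ and as $0$ on the orthogonal complement: this matrix is symmetric, sends $e_1\mapsto v_1e_1+se_2$, and has operator norm $\sqrt{v_1^2+s^2}=\|\tilde q\|/\|\tilde p\|<1$.

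Finally I would globalize and conclude. Having fixed an admissible block $(c_{jk})$ with $\|\tilde C\|_{\mathrm{op}}<1$, prescribe Cauchy data on $\{z_1=0\}$ with the matching $2$‑jet and linear part $z_{n+1}$, and solve $P_m(z,\partial f)=0$ — written in evolution form $\partial_{z_1}f=\Psi(z,\partial_{z_2}f,\dots,\partial_{z_{n+1}}f)$ by the implicit function theorem and then Cauchy--Kowalevsky, exactly as in the proof of Proposition \ref{Tsuno2} — obtaining a holomorphic $f$ near $0$ with $P_m(z,\partial f)\equiv0$, $\partial f(0)=\partial\rho(0)$, $f(0)=0$, and the prescribed $2$‑jet block. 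Then $S=\{f=0\}$ is a smooth complex analytic hypersurface near $0$, everywhere characteristic (and, being tangent to $\partial\Omega$ at a simply characteristic point, everywhere simply characteristic), and by the $2$‑jet computation $\rho|_S$ has a strict local minimum at $0$; shrinking $U$ gives $S\cap\overline\Omega\cap U=\{0\}$, so $\Omega$ is strongly $P$‑convex at $p$. I expect the main obstacle to be precisely the linear‑algebra step above — recognizing that (Pos) is equivalent to $\|\tilde q\|<\|\tilde p\|$ and that this is exactly the solvability threshold for the norm‑constrained complex symmetric completion $\tilde C\tilde p=-\tilde q$ — together with the bookkeeping that the second‑order eikonal relations impose no constraint on the $2$‑jet of $S$ beyond this single linear equation, which relies on the Euler‑identity vanishing $P_m^{(n+1)}(0,\partial\rho(0))=0$.
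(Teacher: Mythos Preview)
Your proposal is correct and follows the same overall strategy as the paper: build an everywhere characteristic analytic hypersurface $S$ tangent to $\partial\Omega$ at $p$ by solving a first-order Cauchy problem (Cauchy--Kowalevsky), and choose the Cauchy data so that the $2$-jet of $S$ makes $\rho|_S$ strictly positive definite at $p$, with (Pos) being precisely the condition that allows such a choice.

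The execution of the algebraic core differs. The paper normalizes further---taking all Levi eigenvalues equal to $1$ and rotating so that $P_m^{(j)}(0,\partial\rho(0))=0$ for $j<n$---which reduces (Pos) to the concrete inequality $\sum_j|a_{z_j}(0)|^2<1$; it then writes down an explicit quadratic Cauchy datum and computes the eigenvalues of the resulting real form $H$ by hand (Lemma~\ref{BasicInequality}). You instead keep the $\lambda_j$'s, rewrite (Pos) as $\|\tilde q\|<\|\tilde p\|$, and recast the existence of a good $2$-jet as the norm-constrained completion problem ``find complex symmetric $\tilde C$ with $\tilde C\tilde p=-\tilde q$ and $\|\tilde C\|_{\mathrm{op}}<1$'', solved via Takagi. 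Your formulation is more invariant and makes transparent \emph{why} (Pos) is exactly the threshold; the paper's computation is more elementary but somewhat opaque. A side benefit of your route is that, since you solve the full eikonal $P_m(z,\partial f)\equiv 0$ in a neighborhood (not merely on $S$), you could appeal directly to Tsuno's original Theorem~\ref{Tsuno1} without needing Proposition~\ref{Tsuno2}.

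Two small points to tighten. First, the quantity $\sup_{\|w\|=1}|w^T\tilde C w|$ is not the numerical radius (which is $\sup|w^*\tilde C w|$); your Takagi argument is nonetheless correct for the bilinear quantity you actually need. Second, in the reduction to $\tilde p=e_1$ and $-\tilde q=v_1e_1+se_2$, unitary congruence alone lets you arrange $s\ge 0$ but not $v_1\ge 0$; you also need the freedom to multiply $\tilde C$ by a global phase (which preserves symmetry, norm, and turns the constraint into $\tilde C\tilde p=-e^{i\psi}\tilde q$) to make $v_1$ real and nonnegative. With that, your $2\times 2$ block has operator norm exactly $\sqrt{v_1^2+s^2}=\|\tilde q\|/\|\tilde p\|<1$, as claimed.
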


\begin{proof}
As shown in \cite{Hormander}, Corollary 7.4.9, condition (Pos) is independent of the choice of coordinates $z$ near $p$, so we may assume $p = 0$ and choose normal coordinates $z=(z',z_{n+1}) \in \C^n\times \bC$ for $M = \del \Omega$, as given in \cite{BER}. That is, $$\Omega = \{ z\in \C^{n+1} | \ \rho(z,\bar{z}) < 0 \},$$
where
\begin{equation}\label{defRho}
\rho(z, \bar{z}) = -2\real z_{n+1} + \sum_1^n |z_j|^2 + O(3),
\end{equation}
and we write the symbol of $P$, for $\zeta=(\chi,\tau) \in \C^n\times \bC$, as
\begin{equation}\label{Psymb}
P_m(z, (\chi,\tau)) = \tilde a (z)\tau^m + \big( \sum_1^n \tilde b_j (z)\chi_j \big) \tau^{m-1} + O(\|\chi\|^2).
\end{equation}
Here $O(3)$ in \eqref{defRho} represents terms that are total degree $3$ or higher in the power series, and $O(\|\chi\|^2)$ in \eqref{Psymb} denotes terms that contain $\chi_j\chi_k$ for $j,k = 1, \ldots, n$.
Then $0$ is a characteristic point of $M$ with respect to $P$ exactly when $\tilde a(0) = 0$, and $M$ is simply characteristic when $\tilde b_j(0) \neq 0$ for some $j$. Since the vector $(\tilde b_1(0), \ldots, \tilde b_n(0))$ is nonzero, by a unitary linear change in the $z'$ variables (which leaves the form \eqref{defRho} of $\rho$ invariant), we may further assume that $\tilde b_n(0) \neq 0$ and $\tilde b_j(0) = 0$ for $j = 1, \ldots, n-1$. Now, if $c(z)$ is holomorphic and nonzero near $0$, and the equation $cPu = 0$ has a solution $u$ which is holomorphic in $\Omega$ but not at $0$, then of course $u$ also solves $Pu =0$. Thus we may divide $P$ by $\tilde b_n(z)$, and without loss of generality we may assume that the symbol of $P$ has the following form
\begin{equation}\label{p_m}
P_m = a(z)\tau^m + \chi_n + \big( \sum_1^{n-1} b_j(z)\chi_j \big) \tau^{m-1} +O(\|\chi\|^2),
\end{equation}
where $a(0) = b_j(0) = 0$ for $j=1, \ldots, n-1$. Then, $Q_0(t, \bar t)$ is given by

\begin{align*}
q_{j,k}(0) &= \delta_j^k, & j=1,\ldots,n \\
q_{j,n+1}(0) &= \overline{q_{n+1,j}(0)} = 0,  & j=1,\ldots,n+1 \\
q_{j,0}(0) &= \overline{q_{0,j}(0)} = (-1)^m a_{z_j}(0), & \qquad j=1,\ldots,n+1\\
q_{0,0}(0) &= 1.
\end{align*}
Furthermore, $t=(t_0,\ldots,t_{n+1})\in \bC^{n+2}$ belongs to $\bC\times T_0^{1,0}M $ precisely when $t_{n+1}=0$. Thus, (Pos) holds at $p=0$ exactly when the form
\[Q'_0(t,\bar t) = \sum_0^{n} q_{j,k}(0) t_j \bar t_k \]
is positive definite on $\C^{n+1}$. By a direct calculation, one can verify that $1$ is an eigenvalue of multiplicity $(n-1)$ of $Q_0'$, with eigenspace spanned by $\{ (a_{z_j}(0)\evec_2 - a_{z_1}(0)\evec_{j+1} \, | \, j = 2,\ldots,n\}$ (here $\evec_j$ is the $j^{th}$ standard basis vector). The other two eigenvalues are $1\pm \left(\sum_1^n |a_{z_j}(0)|^2\right)^{1/2}$, with corresponding eigenvectors $\left(\pm \left(\sum_1^n |a_{z_j}(0)|^2\right)^{1/2}, \overline{a_{z_1}(0)},\ldots,\overline{a_{z_n}(0)}\right)^T$. Thus in these coordinates (Pos) is equivalent to
\begin{equation}\label{positivity}
\sum_1^n|a_{z_j}(0)|^2 < 1.
\end{equation}

To show that under (Pos), $\Omega$ is strongly $P$-convex, we shall construct an analytic, everywhere characteristic hypersurface $S$ in $\C^{n+1}$ which, under inequality (\ref{positivity}), intersects $\overline{\Omega}$ only at $0$. Clearly, $S$ will have to be tangent to $M=\partial \Omega$ and hence the fact that $M$ is simply characteristic at $0$ implies that if $S$ is everywhere characteristic, then it is simply characteristic near $0$.
We shall seek $S$ of the form $\{z\colon z_{n+1}=f(z')\}$, where as above $z'=(z_1,\ldots, z_n)$ and $f(z')$ is a holomorphic function near $0$ in $\bC^n$  such that $f(0)=0$ and $f_{z'}(0)=0$. The statement that $S$ is everywhere characteristic near $0$ is equivalent to
\begin{equation}\label{echar}
P_m((z',f(z')),(f_{z'}(z'),-1))=0.
\end{equation}
By equation (\ref{p_m}) we have $$P_m(0,(0,\ldots,0,-1))=(-1)^ma(0) = 0,\quad \pd{P_m}{\chi_n}(0,(0,\ldots,0,-1)) = 1.$$ Thus, by the implicit function theorem, there is a holomorphic function $F(z,\chi_1,\ldots,\chi_{n-1})$ such that $P_m(z,(\chi,-1)) = 0$ exactly when $\chi_n = F(z,\chi_1,\ldots,\chi_{n-1})$. Thus, the everywhere characteristic condition \eqref{echar} is equivalent to the PDE $$f_{z_n}(z') = F(z',f(z'),f_{z_1}(z'),\ldots,f_{z_{n-1}}(z')).$$
Let us consider the Cauchy problem
\begin{equation}\label{CauchyProblem}
\left\{ \begin{split}
f_{z_n}(z') &= F(z',f(z'),f_{z_1}(z'),\ldots,f_{z_{n-1}}(z')) \\
f(z_1,\ldots,z_{n-1},0) &= g(z_1,\ldots,z_{n-1}) \\
\end{split} \right.
\end{equation}
where $g$ is a holomorphic function to be determined. This has a unique holomorphic solution $f(z')$ by the Cauchy-Kowalevski theorem. If $g(0)=g_{z_j}(0)=0$ for $j=1,\ldots n-1$, then clearly $f(0) = f_{z_j}(0) = 0$ for $j = 1,\ldots,n-1$. Thus, for each holomorphic data function $g$ with $g(0)=g_{z_j}(0)=0$, we obtain an everywhere, simply characteristic hypersurface $S=\{z\colon z_{n+1}=f(z')\}$ passing through $0$ and tangent to $M$. It now remains to show that we can choose $g$ such that  $S$ stays outside of $\Omega$. Differentiating equation (\ref{echar}) with respect to $z_j$ and evaluating at $0$ gives the Taylor expansion
\begin{equation}
f(z') = \sum_{j=1}^{n-1}a_{z_j}(0)z_jz_n + \frac{1}{2}a_{z_n}(0)z_n^2 + g(z_1,\ldots,z_{n-1}) + O(3).
\end{equation}

The proof of the theorem now follows from the following lemma.
\begin{lemma}\label{BasicInequality}
Let $a_1,\ldots,a_n \in \C$ such that
\begin{equation}\label{a_jInequality}
\sum_{j=1}^n |a_j|^2 < 1.
\end{equation}
Then there is a holomorphic quadratic polynomial $q(z_1,\ldots,z_{n-1})$ such that for any holomorphic function $g(z_1,\ldots,z_{n-1})=q(z_1,\ldots,z_{n-1})+O(3)$, we have
\begin{equation}
\sum_{j=1}^n |z_j|^2 -2\emph{Re } \bigg( \sum_{j=1}^{n-1} a_jz_jz_n + \frac{1}{2}a_n z_n^2 + g(z_1,\ldots,z_{n-1}) \bigg) > 0
\end{equation}
for sufficiently small $z \neq 0$.
\end{lemma}
\begin{proof}
By multiplying $z_1$ by a unimodular constant and then doing the same to each $z_j$, we may assume without loss of generality that $a_j \geq 0$ for $j=1,\ldots, n$. In this case, we let $q(z_1,\ldots,z_{n-1}) = -\frac{1}{2}a_n\sum_{j=1}^{n-1}z_j^2$. Consider the real quadratic form
\[ H(z,\bar z) = \sum_{j=1}^n |z_j|^2 -2\real \bigg( \sum_{j=1}^{n-1} a_jz_jz_n + \frac{1}{2}a_n z_n^2 + q(z_1,\ldots,z_{n-1}). \bigg)\]
To complete the lemma, we show that equation (\ref{a_jInequality}) implies that all the eigenvalues of $H$ are positive. As $H$ is real-valued, we identify $\C^n$ with $\R^{2n}$ via $(x_1,\ldots,x_n,y_1,\ldots,y_n) \cong (x_1 + iy_1,\ldots,x_n + i y_n)$. To simplify notation we let $\gamma = \sqrt{\sum_{j=1}^n a_j^2}$, and let $\evec_j$ denote the $j^{th}$ standard basis vector in $\R^{2n}$.
Then we have
\[ H(x,y) = \sumj \left[ (1+ a_n)x_j^2 + (1-a_n)y_j^2 - 2a_jx_jx_n + 2a_j y_jy_n \right] + (1-a_n)x_n^2 + (1+a_n)y_n^2.\]
A direct calculation verifies that the eigenvalues of $H$ are $1+\gamma$ and $1-\gamma$, each with multiplicity $2$, with eigenspaces spanned by
\begin{align*}
\left\{ \sum_{j=1}^{n-1} a_j\evec_{j+n} + (a_n + \gamma)\evec_{2n},\  \sum_{j=1}^{n-1} a_j\evec_j + (\gamma - a_n)\evec_n \right\}& \mbox{ and }\\
\left\{ \sum_{j=1}^{n-1} a_j\evec_{j+n} + (a_n-\gamma)\evec_{2n},\  \sum_{j=1}^n a_j \evec_j - (a_n + \gamma)\evec_n \right\}&
\end{align*}
respectively. If $n \geq 3$, then the other eigenvalues are $1-a_n$ and $1+a_n$, each of multiplicity $n-2$. Their eigenspaces are spanned by $\{ a_j \evec_{n+1} - a_1 \evec_{n+j} \,|\, j = 2,\ldots, n-1\}$ and $\{a_j\evec_1 -a_1 \evec_j \,|\, j=2,\ldots,n-1\}$, respectively. Thus we see that the smallest eigenvalue of $H$ is $1-\gamma$, and by (\ref{a_jInequality}), $H$ is positive definite. This proves the lemma.
\end{proof}

To complete the proof of Theorem \ref{StrictlyPseudoconvexTheorem}, we choose $g(z_1,\ldots,z_{n-1})$ of the form given by lemma (\ref{BasicInequality}) as initial data in the Cauchy problem (\ref{CauchyProblem}). It follows that
\begin{equation}
\rho((z',f(z')),(\bar z',\overline{f(z')})) > 0
\end{equation}
for $|z|\neq 0$ sufficiently small, which completes the proof.
\end{proof}

Combining this result with Corollary \ref{TsunoTheorem}, we have the following corollary.
\begin{corollary}
If {\rm (Pos)} holds at $p$, there is a solution $u$ to $Pu=0$ which is holomorphic in $\Omega$ near $p$ and cannot be prolonged across $p$.
\end{corollary}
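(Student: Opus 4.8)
The plan is to obtain this corollary by composing the two principal results of the section that precede it, with essentially no new work. First I would apply Theorem~\ref{StrictlyPseudoconvexTheorem}: under the standing hypotheses of this section (that $\Omega$ is strictly pseudoconvex at $p$ and that $\del\Omega$ is simply characteristic at $p$ with respect to $P$), the assumption that (Pos) holds at $p$ yields that $\Omega$ is strongly $P$-convex at $p$; that is, there is an everywhere characteristic complex analytic hypersurface $S$ through $p$ with $S\cap\overline{\Omega}\cap U=\{p\}$ for a suitable neighborhood $U$ of $p$.

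Next I would invoke Corollary~\ref{TsunoTheorem}. Its hypotheses are precisely that $P$ has coefficients holomorphic at $p$ (true in this section), that $\del\Omega$ is simply characteristic at $p$ with respect to $P$ (a standing assumption), and that $\Omega$ is strongly $P$-convex at $p$ (just supplied by Theorem~\ref{StrictlyPseudoconvexTheorem}). Its conclusion is exactly the existence of a solution $u$ of $P(z,\del)u=0$ that is holomorphic in $\Omega$ near $p$ but does not extend holomorphically to a full neighborhood of $p$ --- the assertion to be proved. Tracing through, this $u$ is the Tsuno-type solution from Proposition~\ref{Tsuno2} attached to a defining function $f$ of $S$ (which, being tangent to $\del\Omega$ at a simply characteristic point, is itself everywhere simply characteristic near $p$): it has the form $u = F/f + G\log f + H$ with $F,G,H$ holomorphic near $p$ and $(F,G)\not\equiv(0,0)$, so its singularity along $S$ at $p$ obstructs prolongation, while the simple connectedness of $\Omega\cap U$ for a small convex $U$ permits a single-valued branch of $\log f$ on $\Omega$ near $p$.

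No genuine obstacle remains at this stage: the substantive work has already been carried out, namely the Cauchy--Kowalevski construction of an everywhere characteristic hypersurface $S=\{z_{n+1}=f(z')\}$ together with the eigenvalue computation of Lemma~\ref{BasicInequality}, which makes $S$ meet $\overline{\Omega}$ only at $p$ under the inequality $\sum_j|a_{z_j}(0)|^2<1$ that is shown to be equivalent to (Pos), inside the proof of Theorem~\ref{StrictlyPseudoconvexTheorem}; and the removal of the normalization \eqref{TsunoAssumption} in Proposition~\ref{Tsuno2}. The only point to verify in the present step is that the hypotheses of the two cited results line up, which they do. As noted in Section~1.2, the corollary then says that when (Pos) holds the induced map $P\colon\mathcal{O}_p(\Omega)/\mathcal{O}_p\to\mathcal{O}_p(\Omega)/\mathcal{O}_p$ fails to be injective, with $u$ representing a nonzero class in its kernel.
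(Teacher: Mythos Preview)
Your proposal is correct and matches the paper's approach exactly: the corollary is obtained by combining Theorem~\ref{StrictlyPseudoconvexTheorem} (which gives strong $P$-convexity from (Pos)) with Corollary~\ref{TsunoTheorem} (which produces the non-extendable solution from strong $P$-convexity). The additional commentary you provide tracing through Proposition~\ref{Tsuno2} and Lemma~\ref{BasicInequality} is accurate but goes beyond what the paper records, since the substantive work has indeed already been done.
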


%%%%%%%%%%%%%%%%%%%%%%%%%%%%%%%%%%%%%%%%%%%%%%%%%%%%%%%%%%%%%%%%%%%%%%%%%%%%%%%%%%%%%%%%%%%%%%%%%
%\subsection{A Similar Result}
%%%%%%%%%%%%%%%%%%%%%%%%%%%%%%%%%%%%%%%%%%%%%%%%%%%%%%%%%%%%%%%%%%%%%%%%%%%%%%%%%%%%%%%%%%%%%%%%%

We remark that Tsuno has shown (\cite{Tsuno}, Theorem 3) the existence of a solution to $Pu=0$ in $\Omega$ near $p$ which cannot be prolonged across $p$, but under considerably stronger assumptions than (Pos). In the coordinates given in the proof of Theorem \ref{StrictlyPseudoconvexTheorem} above, his conditions reduce to assuming $a_{z_j}(0) = 0$, for $j=0,\dots,n-1$, and $|a_{z_n}| < 1$. This is certainly much stronger than $\sum_{j=1}^n |a_{z_j}(0)|^2 < 1$ as in Theorem \ref{StrictlyPseudoconvexTheorem}.

%%%%%%%%%%%%%%%%%%%%%%%%%%%%%
%bibliography stuff
%%%%%%%%%%%%%%%%%%%%%%%%%%%%%

\nocite{Folland, Khavinson, NT2}
\bibliographystyle{alpha}
\bibliography{StrongPConvexity}

\end{document}